\newcommand{\mR}{{\mathbb R}}
\newtheorem{prop}{Proposition}
\newtheorem*{remark}{Remark}
\newcommand{\bv}{{\boldsymbol v}}
\newcommand{\bu}{{\boldsymbol u}}
\newcommand{\bw}{{\boldsymbol w}}
\newcommand{\bx}{{\boldsymbol x}}
\newcommand{\cS}{{\mathcal S}}
\newcommand{\mS}{{\mathbb S}}
\newcommand{\ma}{{\rm a}}
\newcommand{\ms}{{\rm s}}
\newcommand{\As}{{A_{t,\ms}}}
\newcommand{\Aa}{{A_{t,\ma}}}
\newcommand{\cE}{{\mathcal E}}
\newcommand{\g}{{\operatorname{g}}}
\newcommand{\f}{{\operatorname{f}}}
\newcommand{\w}{{\operatorname{w}}}
\newcommand{\trace}{{\operatorname{tr}}}
\begin{document}

\title{Regularization of time-varying covariance matrices using linear stochastic systems}
\author{Lipeng Ning \thanks{This work was supported in part under grants R21MH115280 (PI: Ning), R01MH097979 (PI: Rathi), R01MH111917 (PI: Rathi), R01MH074794 (PI: Westin).
L. Ning is with the Department of Psychiatry, Brigham and Women's Hospital, Harvard Medical School. Email: lning@bwh.harvard.edu} }
	
\maketitle
\date{}

\begin{abstract}
This work focuses on modeling of time-varying covariance matrices using the state covariance of linear stochastic systems.
Following concepts from optimal mass transport and the Schr\"odinger bridge problem (SBP), we investigate several covariance paths induced by different regularizations on the system matrices.
Specifically, one of the proposed covariance path generalizes the geodesics based on the Fisher-Rao metric to the situation with stochastic input. 
Another type of covariance path is generated by linear system matrices with rotating eigenspace in the noiseless situation.
The main contributions of this paper include the differential equations for these covariance paths and the closed-form expressions for scalar-valued covariance.
We also compare these covariance paths using several examples.
\end{abstract}

\begin{IEEEkeywords}
Optimal control; linear stochastic system; Fisher-Rao metric; optimal mass transport; 
\end{IEEEkeywords}

\IEEEpeerreviewmaketitle

\section{Introduction}\label{sec:intro}

Consider a linear stochastic system 
\begin{align}\label{eq:linsysSto}
\dot \bx_t=A_t \bx_t+\sigma d\bw_t,
\end{align}
where $A_t\in \mR^{n\times n}$ and $\bw_t$ is a standard n-dimensional Wiener process. 
The corresponding state covariance 
$ P_t:=\cE(\bx_t\bx_t')$ evolves according to
\begin{align}\label{eq:sigmat}
\dot  P_t=A_t  P_t+ P_t A_t+\sigma^2I,
\end{align}
where $I$ denotes the identity matrix.
Given two positive definite matrices $ P_0,  P_1$ at $t=0, 1$, respectively, we consider covariance paths that connect the two covariance given by the solution to the following problem
\begin{align}\label{eq:generalProblem}
\min_{ P_t, A_t} \left\{ \int_0^1 f(A_t)dt \mid  \eqref{eq:sigmat} \mbox{ holds, and }  P_0,  P_1 \mbox{ specified}\right\},
\end{align}
where $f(A_t)$ denotes a quadratic function of $A_t$ which may depend on $ P_t$. 
The optimal solutions to \eqref{eq:generalProblem} could provide parametric models of covariance paths which are generalizations of shortest lines or geodesics.
These paths could be applied to fit noisy measurements in order to understand the underlying system dynamics of stochastic processes. 

In this paper, we investigate the solution to \eqref{eq:generalProblem} corresponding to three objective functions, which have been explored in \cite{Ning2018} using linear systems without stochastic input noise.
Specifically, the first type of objective function quantifies the optimal mass transport cost between multivariate Gaussian distributions \cite{VillaniBook,RachevBook,Knott1984,Takatsu2008} when there is no stochastic noise.
In the presence of noise, it has been recently extensively studied that the optimal solution becomes the covariance path induced by the Schr\"odinger Bridge Problem (SBP) \cite{Schrodinger31} \cite{Leonard13,Chen2016Schrodinger}.
The second objective function quantifies a type of weighted-mass-transport cost, which is also related to the Fisher-Rao metric from information geometry \cite{Rao1945,Amari2000}.
The third family of objective function is given by the standard weighted-least-squares of the system matrices.
The main contributions of this paper include the differential equation formulations of the optimal paths induced by the last two objective functions and the closed-form expressions in the special case of scalar-valued covariance.

The organization of this paper is as follows. 
In Section \ref{sec:transport} we revisit covariance paths based on optimal mass transport and SBP.
In Section \ref{sec:info}, we derive the covariance paths corresponding to a weighted-mass-transport cost function, which is equivalent to the Fisher-Rao metric in the noiseless situation.
In Section \ref{sec:rot}, we investigate the covariance paths corresponding to the weighted-least-square functions, which includes the Fisher-Rao based covariance paths in a special case.
In Section \ref{sec:example}, we compare these covariance paths using several examples.
Section \ref{eq:discussion} concludes the paper with discussions.

For notations, $\mS^{n}, \mS^{n}_{+}, \mS^{n}_{++}$ denote the sets of symmetric, positive semidefinite, and positive definite matrices of size $n\times n$, respectively. 
Small boldface letters, e.g. $\bx, \bv$, represent column vectors. Capital letters, e.g. $P, A$, denote matrices. Regular small letters, e.g. $\w, h$ are for scalars or scalar-valued functions.

\section{Mass-transport based covariance paths}\label{sec:transport}

Consider $\bu_t=A_t\bx_t$ as the control input that steers the state covariance matrices according to \eqref{eq:sigmat}. Define
\begin{align*}
\f_{ P_t}^{\rm omt}(A_t)=\cE(\|\bu_t\|_2^2)=\trace(A_t P_t A_t').
\end{align*}
Then the minimum work needed to steer the covariance matrix from $ P_0$ to $ P_1$ is equal to
\begin{align}\label{eq:probOMT}
\min_{ P_t, A_t} \bigg\{\int_0^1 \trace(A_t P_t A_t') dt &\mid \dot  P_t=A_t  P_t+ P_t A_t'+\sigma^2 I,  \nonumber\\
&P_0,  P_1 \mbox{ specified}\bigg\}.
\end{align}
If the noise component vanishes, i.e. $\sigma=0$, then \eqref{eq:probOMT} gives the optimal mass transport distance \cite{VillaniBook,RachevBook,Knott1984,Takatsu2008} between two zero-mean Gaussian distributions with covariance matrices $ P_0$ and $ P_1$, respectively. It is also related to the Bures distance between density matrices in quantum mechanics \cite{Uhlmann1992,Ning2013,Bhatia2017}.
In the general situation when $\sigma$ is non-zero, \eqref{eq:probOMT} can be viewed as the Schr\"odinger Bridge Problem (SBP) \cite{Schrodinger31} between two zero-mean Gaussian probability density functions with covariance being $P_0$ and $P_1$, respectively. 
The basic idea of SBP is to find a stochastic system whose probability law on the diffusion paths is most similar to that of a reference system measured by relative entropy and satisfies the initial and final marginal probability distributions.
Its relations with stochastic optimal control and mass transport have been extensively studied recently \cite{Leonard13,Chen2016Schrodinger}.
The more general situations when the reference model is a linear time-varying system with degenerate diffusions, i.e. noises that influence a subspace of the random states, have also recently been studied in \cite{Chen2016,Chen2016b}.
The following proposition presents the solution to \eqref{eq:probOMT}.

\begin{prop}\label{prop:OMT}
Given $P_0, P_1\in \mS^n_{++}$ and a scalar $\sigma$. The unique set of solution to \eqref{eq:probOMT} is equal to
\begin{align}
A_t^{\rm omt}&=-\Pi_0(I-\Pi_0t)^{-1},\label{eq:Atomt}\\
 P_t^{\rm omt}&=(I-\Pi_0t) P_0(I-\Pi_0t)+\sigma^2(It-\Pi_0t^2),\label{eq:Stomt}
\end{align}
where 
\begin{align}\label{eq:LambdaOMT}
\Pi_0=I -  P_0^{-\tfrac12} \left(\left( P_0^{\tfrac12} P_1 P_0^{\tfrac12}+\frac14 \sigma^4 I \right)^{\tfrac12}-\frac12 \sigma^2 I   \right)  P_0^{-\tfrac12}.
\end{align}
\end{prop}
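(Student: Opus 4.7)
The plan is to attack \eqref{eq:probOMT} variationally by introducing a time-varying symmetric matrix multiplier $\Lambda_t$ for the dynamics \eqref{eq:sigmat}, producing the Lagrangian
\[
\cL=\int_0^1\!\Bigl[\trace(A_tP_tA_t')+\trace\bigl(\Lambda_t(A_tP_t+P_tA_t'+\sigma^2 I-\dot P_t)\bigr)\Bigr]dt.
\]
Stationarity in $A_t$ and the invertibility of $P_t$ force $A_t=-\Lambda_t$, which in particular makes $A_t$ symmetric. Stationarity in $P_t$, after integrating $\trace(\Lambda_t\dot P_t)$ by parts and using that $P_0,P_1$ are fixed, yields an adjoint equation that, once the relation $A_t=-\Lambda_t$ is substituted back in, collapses to the matrix Riccati equation $\dot\Lambda_t=\Lambda_t^2$.

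Setting $\Pi_0:=\Lambda_0$, I would verify by direct differentiation that $\Lambda_t=\Pi_0(I-\Pi_0t)^{-1}$ solves $\dot\Lambda_t=\Lambda_t^2$, exploiting that $\Pi_0$ commutes with every polynomial in itself and hence with $(I-\Pi_0t)^{-1}$. This gives \eqref{eq:Atomt} immediately. Substituting this $A_t$ into \eqref{eq:sigmat} and again using the same commutativity, I would then check by differentiation that the ansatz $P_t=(I-\Pi_0t)P_0(I-\Pi_0t)+\sigma^2 t(I-\Pi_0t)$ satisfies the Lyapunov ODE with initial value $P_0$; since $\sigma^2 t(I-\Pi_0t)=\sigma^2(It-\Pi_0t^2)$, this is exactly \eqref{eq:Stomt}.

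The main obstacle is identifying $\Pi_0$ from the terminal constraint $P_t|_{t=1}=P_1$. I would parameterize $I-\Pi_0=P_0^{-1/2}ZP_0^{-1/2}$ with $Z$ symmetric; evaluating the ansatz at $t=1$ and sandwiching both sides by $P_0^{1/2}$ converts $P_1=(I-\Pi_0)P_0(I-\Pi_0)+\sigma^2(I-\Pi_0)$ into the quadratic matrix equation $Z^2+\sigma^2Z=P_0^{1/2}P_1P_0^{1/2}$, equivalently $(Z+\tfrac12\sigma^2 I)^2=P_0^{1/2}P_1P_0^{1/2}+\tfrac14\sigma^4 I$. Taking the unique positive-definite square root of the right-hand side and solving for $Z$ reproduces \eqref{eq:LambdaOMT}. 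For uniqueness, the positivity requirement $P_t\succ 0$ throughout $[0,1]$ forces $Z\succeq 0$, which selects the positive square root and rules out the other branch; alternatively, one may invoke the known convexity/uniqueness of the Gaussian SBP from \cite{Leonard13,Chen2016Schrodinger} to conclude that the stationary point constructed here is the unique minimizer.
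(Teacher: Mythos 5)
Your proposal is correct and follows essentially the same route as the paper: a first-order variational (Pontryagin/Lagrangian) argument giving $A_t=-\Lambda_t$, the Riccati equation $\dot\Lambda_t=\Lambda_t^2$, the explicit forms \eqref{eq:Atomt}--\eqref{eq:Stomt}, and the terminal quadratic matrix equation for $\Pi_0$. Your completing-the-square in $Z=P_0^{1/2}(I-\Pi_0)P_0^{1/2}$ with selection of the positive root is just a repackaging of the paper's eigenvalue argument (which enforces $I-\Pi_0\succ 0$ by ruling out singular or indefinite $P_t$ on $[0,1]$), so no substantive difference.
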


\begin{proof}
The optimization problem \eqref{eq:probOMT} is viewed as an optimal control problem with $A_t$ being matrix-valued control input. A necessary condition for the optimal solution is that the derivative of the Hamiltonian
\[
h_1( P_t,A_t,\Pi_t)=\trace(A_t P_tA_t+\Pi_t(A_t P_t+ P_t A_t'+\sigma^2 I))
\]
with respect to $A_t$ vanishes with the optimal control. This gives rise to
\begin{align*}
A_t P_t+\Pi_t P_t=0.
\end{align*}
Thus, 
\begin{align}\label{eq:Atomt}
A_t=-\Pi_t.
\end{align}
Next, the optimal $\dot \Pi_t$ also needs to annihilate the derivative of $h_1(\cdot)$ with respect to $ P_t$ which leads to
\begin{align}\label{eq:Lambdatomt}
\dot \Pi_t= -A_t'A_t-A_t'\Pi_t-\Pi_t A_t.
\end{align}
Substituting \eqref{eq:Atomt} to \eqref{eq:sigmat} and \eqref{eq:Lambdatomt} to deduce that
\begin{align}
\dot  P_t&=-\Pi_t P_t- P_t\Pi_t+\sigma^2 I,\label{eq:DotSigma}\\ 
\dot \Pi_t&=\Pi_t^2.\label{eq:DotLambda}
\end{align}

Next, we show that all eigenvalues of the initial $\Pi_0$ must be smaller than one. 
For this purpose, we note that solutions for $ P_t$ and $\Pi_t$ from \eqref{eq:DotSigma} and \eqref{eq:DotLambda} have the following form
\begin{align}
 P_t&=(I-\Pi_0t)  P_0 (I- \Pi_0t)+\sigma^2 (It-\Pi_0t^2),\label{eq:Sigma_omt}\\
\Pi_t&=\Pi_0(I-\Pi_0t)^{-1}.
\end{align}
Assume that $\Pi_0$ has an eigenvalue $\lambda_0 >1$, then $A_t=-\Pi_t$ becomes unbounded when $t$ increases to $1/\lambda_0$. At the same time, $ P_t$ is singular at $t=1/\lambda_0$ and becomes non positive semi-definite when $t> 1/\lambda_0$. 
Therefore, all eigenvalues of $\Pi_0$ are smaller than one.

Next, setting $t=1$ in \eqref{eq:Sigma_omt} and multiplying both sides by $ P_0^{\tfrac12}$ to obtain that
\[
 P_0^{\tfrac12} P_1 P_0^{\tfrac12}=\left(  P_0^{\tfrac12}(I-\Pi_0 )  P_0^{\tfrac12}\right)^2  +\sigma^2  P_0^{\tfrac12}(I-\Pi_0 )  P_0^{\tfrac12}.
\]
Therefore $ P_0^{\tfrac12}(I-\Pi_0 )  P_0^{\tfrac12}$ has the same eigenvectors as $ P_0^{\tfrac12} P_1 P_0^{\tfrac12}$. If $y$ is an eigenvalue of $ P_0^{\tfrac12} P_1 P_0^{\tfrac12}$, then the corresponding eigenvalue of $ P_0^{\tfrac12}(I-\Pi_0 )  P_0^{\tfrac12}$, denoted by $x$, satisfies that 
$x^2+\sigma^2 x=y$. The two solutions are given by 
\[
x_\pm=-\frac12 \sigma^2 \pm(y+\frac14 \sigma^4)^{\tfrac12}.
\]
But $x_{-}$ is negative which contradicts to that $I-\Pi_0$ is positive definite. Thus $x_+$ is the only feasible solution. Therefore, 
\[
 P_0^{\tfrac12}(I-\Pi_0 )  P_0^{\tfrac12}= \left( P_0^{\tfrac12} P_1 P_0^{\tfrac12}+\frac14\sigma^4 I\right)^{\tfrac12}-\frac12 \sigma^2 I,
\]
which gives rise to the optimal solution in \eqref{eq:LambdaOMT}. Then, the proposition is proved.
\end{proof}

We note that if $\Pi_0$ is singular, then $A_t^{\rm omt}$ is also singular for all $t\in [0, t]$, which implies free diffusion in the subspace spanned by the eigenvectors of $\Pi_0$ corresponding to zero eigenvalues. In the noiseless situation when $\sigma=0$, then covariance path in \eqref{eq:Stomt} is equal to the geodesic induced by the Wasserstein-2 metric.
                                                                                                                                                                                                                                                                                                                                                                                                                                                                                                                                                                                                                                                                                                                                                                                                                                                                                                                                                                                                                                                                                                                                                                                                                                                                                                                                                                                                                                                                                                                                                                                                                                                                                                                                                                                                                                                                                                                                                                                                                                                                                                                                                                                                                                                                                                                                                                                                                                                                                                                                                                                                                          \section{Information-geometry based covariance paths}\label{sec:info}
The Fisher information metric has provided a well-defined distance measure between probability distributions. For zero-mean multivariate Gaussian distributions, the Fisher information metric can be expressed as a quadratic form of the covariance matrices, which is referred to as the Fisher-Rao metric \cite{Rao1945,Amari2000}. Specifically, let $ P$ be a positive definite covariance matrix and let $\Delta$ be a symmetric matrix denoting a tangent direction at $ P$ on the manifold of positive-definite matrices. 
Then the Fisher-Rao metric has the following form \cite{Amari2000}
\[
\g_{ P}(\Delta)=\trace( P^{-1}\Delta P^{-1}\Delta).
\]
The geodesic connecting $ P_0,  P_1$ induced by the Fisher-Rao metric is the optimal solution to
\begin{align}\label{eq:probFR}
\min_{ P_t, \dot{ P}_t} \left\{\int_0^1 \trace( P^{-1}\dot{ P}_t P^{-1}\dot{ P}_t) dt \mid   P_0,  P_1 \mbox{ specified}\right\}.
\end{align}
The optimal solution has the following well-known expression \cite{Moakher}
\begin{align}\label{eq:SigmatFR}
 P_t= P_0^{\tfrac12} ( P_0^{-\tfrac12} P_1 P_0^{-\tfrac12})^t  P_0^{\tfrac12}.
\end{align}
In \cite{Ning2018}, we showed that this geodesic is also the optimal solution to an optimization problem in the form of \eqref{eq:generalProblem} in the situation when the noise component $\sigma=0$.
Specifically, we denote
\begin{align*}
\f_{ P_t}^{\rm info}(A_t)=\cE(\|\bu_t\|_{ P_t^{-1}}^2)=\cE(\bu_t'  P_t^{-1} \bu_t)=\trace( P_t^{-1}A_t P_t A_t').
\end{align*}
Then, \eqref{eq:SigmatFR} is also the optimal solution to 
\begin{align}\label{eq:probInfo_noiseless}
\min_{ P_t, A_t} \left\{\int_0^1 \f_{ P_t}^{\rm info}(A_t) dt \mid \dot  P_t=A_t  P_t+ P_t A_t',  P_0,  P_1 \mbox{ specified}\right\},
\end{align}
with the optimal system matrix given by the constant matrix
\[
A=\frac12  P_0^{\tfrac12}\log( P_0^{-\tfrac12} P_1 P_0^{-\tfrac12})  P_0^{-\tfrac12}.
\]
Note that the optimization problem \eqref{eq:probInfo_noiseless} provides an interesting weighted-mass-transport view of the Fisher-Rao metric.

Following \eqref{eq:generalProblem}, we consider the optimization problem in below:
\begin{align}\label{eq:probInfo}
\min_{ P_t, A_t} \bigg\{\int_0^1 \trace( P_t^{-1}A_t P_t A_t') &dt  \mid \dot  P_t=A_t  P_t+ P_t A_t'+\sigma^2 I, \nonumber\\
& P_0,  P_1 \mbox{ specified}\bigg\}.
\end{align}

\subsection{On the optimal paths}
Using variational analysis, we obtain the following proposition on the optimal solution to \eqref{eq:probInfo}.
\begin{prop}\label{prop:info}
Given $P_0, P_1\in \mS^n_{++}$ and a scalar $\sigma$.
If there exists a path that satisfies the following differential equation
\begin{align}
\dot P_t&=-2 P_t \Pi_t  P_t+\sigma^2I,\label{eq:SigmaInfo}\\
\dot \Pi_t&=2\Pi_t P_t \Pi_t,\label{eq:LambdaInfo}
\end{align}
with $P_t$ being equal to $ P_0$ and $P_1$ at $t=0$ and $1$, respectively, then $P_t$ is an optimal solution to \eqref{eq:probInfo}. The corresponding $A_t$ is equal to
\begin{align}\label{eq:AtInfo}
A_t=- P_t\Pi_t.
\end{align}
Moreover, the path $P_t$ also satisfies the following differential equation
\begin{align}\label{eq:GeodesicInfo}
\ddot  P_t -\dot  P_t P_t ^{-1}\dot  P_t+\sigma^4  P_t^{-1}=0.
\end{align}
\end{prop}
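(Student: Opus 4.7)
The plan is to apply the Pontryagin-type variational argument used in the proof of Proposition \ref{prop:OMT}, adapted to the new running cost. I would introduce a symmetric matrix Lagrange multiplier $\Pi_t$ for the state equation \eqref{eq:sigmat} and form the Hamiltonian
\[
h_2(P_t,A_t,\Pi_t)=\trace(P_t^{-1}A_tP_tA_t')+\trace(\Pi_t(A_tP_t+P_tA_t'+\sigma^2 I)).
\]
Annihilating $\partial h_2/\partial A_t$, using the matrix identity $\partial_A \trace(P^{-1}APA')=2P^{-1}AP$, gives $P_t^{-1}A_tP_t+\Pi_tP_t=0$; left-multiplying by $P_t$ and right-multiplying by $P_t^{-1}$ produces the feedback law $A_t=-P_t\Pi_t$ in \eqref{eq:AtInfo}.

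For the adjoint equation I would differentiate $h_2$ in $P_t$, using $d(P^{-1})=-P^{-1}dP\,P^{-1}$ on the running-cost term. This yields
\[
-\dot\Pi_t=A_t'P_t^{-1}A_t-P_t^{-1}A_tP_tA_t'P_t^{-1}+A_t'\Pi_t+\Pi_t A_t.
\]
Substituting $A_t=-P_t\Pi_t$, so $A_t'=-\Pi_t P_t$, the first two terms each evaluate to $\Pi_t P_t \Pi_t$ and cancel, while the last two combine to $-2\Pi_t P_t\Pi_t$. This is \eqref{eq:LambdaInfo}, and inserting the feedback law into \eqref{eq:sigmat} simplifies to \eqref{eq:SigmaInfo} in parallel.

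To obtain the second-order ODE \eqref{eq:GeodesicInfo} I would eliminate $\Pi_t$. Rearranging \eqref{eq:SigmaInfo} gives $P_t\Pi_t P_t=\tfrac12(\sigma^2 I-\dot P_t)$, hence $\Pi_t P_t=\tfrac12 P_t^{-1}(\sigma^2 I-\dot P_t)$ and $P_t\Pi_t=\tfrac12(\sigma^2 I-\dot P_t)P_t^{-1}$. Differentiating the identity in $t$ yields
\[
\dot P_t\Pi_t P_t+P_t\dot\Pi_t P_t+P_t\Pi_t\dot P_t=-\tfrac12 \ddot P_t,
\]
and \eqref{eq:LambdaInfo} rewritten as $P_t\dot\Pi_t P_t=2(P_t\Pi_t P_t)(\Pi_t P_t)$ turns the left-hand side into a polynomial in $\sigma^2 I-\dot P_t$ and $P_t^{-1}$ alone. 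Expanding, the cross terms of types $\sigma^2\dot P_t P_t^{-1}$ and $\sigma^2 P_t^{-1}\dot P_t$ are expected to cancel pairwise, leaving $-\tfrac12\dot P_t P_t^{-1}\dot P_t+\tfrac12\sigma^4 P_t^{-1}=-\tfrac12\ddot P_t$, which is \eqref{eq:GeodesicInfo}.

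The main obstacle is non-commutativity: since $P_t$ and $\Pi_t$ do not commute, the matrix derivatives with $P_t^{-1}$ sandwiched inside the trace must be handled carefully, and both the adjoint derivation and the elimination of $\Pi_t$ hinge on specific cancellations among four non-commuting products. Once the substitution $A_t=-P_t\Pi_t$ is in place, however, the key identity $A_t'P_t^{-1}A_t=P_t^{-1}A_tP_tA_t'P_t^{-1}=\Pi_t P_t\Pi_t$ makes the adjoint collapse cleanly, and the analogous pairwise cancellation of mixed $\sigma^2$ terms is exactly what is responsible for the compact form of \eqref{eq:GeodesicInfo}.
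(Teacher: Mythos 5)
Your proposal is correct and takes essentially the same route as the paper: the same Hamiltonian $h_2$, the same stationarity condition giving $A_t=-P_t\Pi_t$, the same adjoint equation collapsing to \eqref{eq:SigmaInfo}--\eqref{eq:LambdaInfo}, and elimination of $\Pi_t$ to reach \eqref{eq:GeodesicInfo}. The only cosmetic difference is that you differentiate the identity $P_t\Pi_t P_t=\tfrac12(\sigma^2 I-\dot P_t)$, whereas the paper differentiates $\Pi_t=\tfrac12(\sigma^2 P_t^{-2}-P_t^{-1}\dot P_t P_t^{-1})$; both lead to the same cancellations of the mixed $\sigma^2$ terms.
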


\begin{proof}
Following the same method as in the proof of Proposition \eqref{prop:OMT}, we will derive the solution to the optimal control problem \eqref{eq:probInfo} using the following Hamiltonian
\begin{align*}
h_2( P_t,A_t,\Pi_t)=\trace\left(P_t^{-1}A_t P_t A_t' +\Pi_t(A_t P_t+ P_t A_t'+\sigma^2 I)\right).
\end{align*}
It is necessary that $\dot \Pi_t$ annihilates the partial derivative of $h_2(\cdot)$ with respect to $P_t$, which leads to
\begin{align}\label{eq:dotLambda_info}
\dot\Pi_t=-A_t' P_t^{-1}A_t+ P_t^{-1}A_t P_t A_t' P_t^{-1}-\Pi_t A_t-A_t' \Pi_t.
\end{align}
Moreover, setting the derivative of $h_2(\cdot)$ with respect to $A_t$ to zero to obtain that
\begin{align}\label{eq:At_info_condition}
A_t=- P_t\Pi_t.
\end{align}
Then, \eqref{eq:SigmaInfo} and \eqref{eq:LambdaInfo} can be obtained by substituting \eqref{eq:At_info_condition} to \eqref{eq:sigmat} and \eqref{eq:dotLambda_info}, respectively. 
From \eqref{eq:SigmaInfo}, we obtain the following expression
\begin{align*}
\Pi_t=\frac12 (\sigma^2 P_t^{-2}- P_t^{-1}\dot P_t P_t^{-1}).
\end{align*}
Next, taking the derivative of $\Pi_t$ and setting it equal to \eqref{eq:LambdaInfo} we obtain that
\begin{align*}
\dot \Pi_t=&\frac12\bigg(\sigma^2(- P_t^{-1}\dot{ P_t} P_t^{-2}- P_t^{-2}\dot{ P}_t P_t^{-1})\\
&+2 P_t^{-1}\dot{ P}_t P_t^{-1}\dot{ P}_t P_t^{-1}- P_t^{-1}\ddot{ P}_t  P_t^{-1} \bigg)\\
=&\frac12 (\sigma^2 P_t^{-2}- P_t^{-1}\dot P_t P_t^{-1}) P_t (\sigma^2 P_t^{-2}- P_t^{-1}\dot P_t P_t^{-1}).
\end{align*}
Then \eqref{eq:GeodesicInfo} can be obtained from the above equation after simplifications. Thus, the proof is complete.
\end{proof}

In the noiseless situation when $\sigma=0$, \eqref{eq:GeodesicInfo} becomes 
\[
\ddot P_t-\dot{ P}_t P_t^{-1}\dot{ P}_t=0,
\]
which is the geodesic equation induced by the Fisher-Rao metric \cite{Moakher,Lenglet2006}. In this case, the closed-form expression of $P_t$ is given by \eqref{eq:SigmatFR}.
The closed-form expression of the optimal solution to \eqref{eq:probInfo} is currently unknown to the author except for the scalar-valued cases given in below.

\subsection{The scalar case}
Consider a scalar-valued covariance path $p_t$ that is expressed by one of the following equations:
\begin{align}
p_t&=p_0+\sigma^2 t,\label{eq:solution1}\\
p_t&=ae^{bt}-\frac{\sigma^4}{4ab^2}e^{-bt},\label{eq:solution2}\\
p_t&=\frac{\sigma^2}{\omega}\cos(\omega t+\theta),\label{eq:solution3}\\
& \mbox{ with } 0<\omega< \pi, -\tfrac{\pi}{2}<\theta<\tfrac{\pi}{2}. \nonumber
\end{align}
It is straightforward to verify that all the above expressions satisfy that 
\[
p_t\ddot p_t -(\dot p_t)^2+\sigma^4=0,
\]
which is a scalar-version of \eqref{eq:GeodesicInfo}.
Clearly, \eqref{eq:solution1} corresponds to a covariance path purely driven by the input noise.
For the covariance path in \eqref{eq:solution3}, the constraint $0<\omega<\pi$ is to ensure that $P_t$ does not have negative values on $t\in[0, 1]$. 
Moreover, the constraint $-\frac{\pi}{2}<\theta<\frac{\pi}{2}$ guarantees that $p_0$ is positive.
The following proposition shows that \eqref{eq:solution2} and \eqref{eq:solution3} connect covariance that satisfy different boundary conditions.

\begin{prop}\label{prop:expsolution}
Given three positive scalars $p_0, p_1$ and $\sigma$. If $|p_1-p_0|>\sigma^2$, then exists a unique solution to \eqref{eq:probInfo} which has the form of \eqref{eq:solution2}. If $|p_1-p_0|<\sigma^2$, then there exists a unique solution to \eqref{eq:probInfo} which has the form of \eqref{eq:solution3}.
\end{prop}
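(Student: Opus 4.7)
The plan is to reduce the problem to existence and uniqueness of a single scalar parameter by exploiting a first integral of the scalar ODE. A direct computation shows that the scalar version of \eqref{eq:GeodesicInfo},
\begin{equation*}
p_t \ddot p_t - (\dot p_t)^2 + \sigma^4 = 0,
\end{equation*}
admits the conserved quantity $\lambda = ((\dot p_t)^2 - \sigma^4)/p_t^2$, so every positive solution satisfies $(\dot p_t)^2 = \lambda p_t^2 + \sigma^4$ and, equivalently, $\ddot p_t = \lambda p_t$ for some $\lambda \in \mR$. The three candidate families \eqref{eq:solution1}--\eqref{eq:solution3} correspond exactly to $\lambda = 0$, $\lambda > 0$, and $\lambda = -\omega^2 < 0$, respectively, so once the endpoint data $(p_0,p_1)$ are imposed only the single parameter $\lambda$ remains free.

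For the hyperbolic branch I set $\mu = \sqrt{\lambda} > 0$ and write the solution of $\ddot p = \mu^2 p$ with initial value $p_0$ as $p_t = p_0 \cosh(\mu t) + B \sinh(\mu t)$. The terminal condition $p(1) = p_1$ forces $B = (p_1 - p_0 \cosh\mu)/\sinh\mu$, and the energy identity $\mu^2 (B^2 - p_0^2) = \sigma^4$ reduces, via $\cosh^2 - \sinh^2 = 1$, to the scalar equation
\begin{equation*}
F(\mu) := \frac{\mu^2(p_0^2 + p_1^2 - 2 p_0 p_1 \cosh\mu)}{\sinh^2\mu} = \sigma^4.
\end{equation*}
A Taylor expansion gives $F(0^+) = (p_1 - p_0)^2$, and $F$ vanishes at the unique $\mu_*$ with $\cosh\mu_* = (p_0^2 + p_1^2)/(2p_0 p_1)$. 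Strict monotonicity of $F$ on $(0, \mu_*)$, from $(p_1 - p_0)^2$ down to $0$, then yields a unique admissible root exactly when $\sigma^4 < (p_1 - p_0)^2$, i.e., when $|p_1 - p_0| > \sigma^2$. The parameters of \eqref{eq:solution2} are recovered by $b = \mu$ (with sign flipped when $p_1 < p_0$), $a = (p_0 + B)/2$, and the identity $c = \sigma^4/(4 a b^2) = (B - p_0)/2$ is automatic from the energy constraint.

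The trigonometric branch proceeds identically with $\omega = \sqrt{-\lambda} \in (0, \pi)$ and $p_t = p_0 \cos(\omega t) + B \sin(\omega t)$, yielding the scalar equation
\begin{equation*}
G(\omega) := \frac{\omega^2 (p_0^2 + p_1^2 - 2 p_0 p_1 \cos\omega)}{\sin^2 \omega} = \sigma^4,
\end{equation*}
with $G(0^+) = (p_1 - p_0)^2$ and $G(\omega) \to +\infty$ as $\omega \to \pi^-$. Strict monotonicity of $G$ on $(0, \pi)$ produces a unique $\omega$ exactly when $\sigma^4 > (p_1 - p_0)^2$, i.e., when $|p_1 - p_0| < \sigma^2$. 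The amplitude-phase form $p_t = (\sigma^2/\omega)\cos(\omega t + \theta)$ of \eqref{eq:solution3} is then recovered by choosing $\theta \in (-\pi/2, \pi/2)$ with $\cos\theta = p_0 \omega/\sigma^2 \in (0, 1)$ and $\sin\theta = -B\omega/\sigma^2$; the bound $\omega < \pi$ together with positivity at both endpoints keeps the cosine argument inside a single positive branch, guaranteeing $p_t > 0$ on $[0, 1]$.

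The hard step I anticipate is establishing the strict monotonicity of $F$ on $(0, \mu_*)$ and of $G$ on $(0, \pi)$, since this is exactly what upgrades intermediate-value existence to uniqueness of the parameter. I would attack this by computing $\sinh^3\mu \cdot F'(\mu)$ and $\sin^3\omega \cdot G'(\omega)$ and using parity expansions of the hyperbolic and trigonometric functions to rewrite each as a sign-definite series in $\mu$ (resp.\ $\omega$), leveraging $p_0, p_1 > 0$. Once this monotonicity is in hand, the remainder of the argument is routine parameter bookkeeping.
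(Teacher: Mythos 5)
Your reduction is correct, and it takes a genuinely different route from the paper. You first extract the first integral $\lambda=((\dot p_t)^2-\sigma^4)/p_t^2$, so every positive solution of the scalar equation solves $\ddot p_t=\lambda p_t$, and the two-point problem collapses to one scalar shooting equation, $F(\mu)=\sigma^4$ or $G(\omega)=\sigma^4$, in which $\sigma$ no longer appears on the left. The paper instead eliminates the parameters directly from the parametric forms \eqref{eq:solution2}--\eqref{eq:solution3}: for the exponential branch it studies $\phi(b)=4(p_1-p_0e^{-b})(p_1-p_0e^{b})b^2-\sigma^4(e^b-e^{-b})^2$ and argues through $\phi(0)=\phi'(0)=0$, $\phi''(0)=8((p_1-p_0)^2-\sigma^4)$ and $\phi'''(b)<0$; for the trigonometric branch it shows $\psi(\omega)=\sigma^4\sin^2(\omega)/\omega^2+2p_0p_1\cos(\omega)$ is strictly decreasing on $(0,\pi)$, and it obtains the necessity of $|p_1-p_0|\le\sigma^2$ from $|p_1-p_0|\le\int_0^1|\dot p_t|\,dt\le\sigma^2$. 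Your equations are the paper's relations rearranged, but your framing buys two things: the analysis of the two branches becomes symmetric (one monotone function per branch), and uniqueness is obtained among \emph{all} solutions of the scalar version of \eqref{eq:GeodesicInfo} satisfying the endpoints, not merely within each parametric family, since $\lambda$ classifies the three families \eqref{eq:solution1}--\eqref{eq:solution3} exhaustively.

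The one step you defer---strict monotonicity of $F$ on $(0,\mu_*)$ and of $G$ on $(0,\pi)$---is the crux of the uniqueness claim, so as written the argument is incomplete; however, it closes much more easily than the sign-definite-series attack you propose. Factor
\begin{align*}
F(\mu)=\frac{\mu^2}{\sinh^2\mu}\left(p_0^2+p_1^2-2p_0p_1\cosh\mu\right),\qquad
G(\omega)=\frac{\omega^2}{\sin^2\omega}\left(p_0^2+p_1^2-2p_0p_1\cos\omega\right).
\end{align*}
On $(0,\mu_*)$ both factors of $F$ are positive and strictly decreasing (because $\sinh\mu/\mu$ is strictly increasing and $\cosh\mu$ is strictly increasing), and on $(0,\pi)$ both factors of $G$ are positive and strictly increasing (because $\sin\omega/\omega$ is strictly decreasing and $\cos\omega$ is strictly decreasing); hence $F$ is strictly decreasing and $G$ strictly increasing, which together with $F(0^+)=G(0^+)=(p_1-p_0)^2$, $F\le 0$ on $[\mu_*,\infty)$, and $G(\omega)\to+\infty$ as $\omega\to\pi^-$ finishes both branches exactly as you outline. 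Two small bookkeeping points: $\cos\theta=p_0\omega/\sigma^2$ can equal $1$ (when $p_1=p_0\cos\omega$, i.e.\ $B=0$), so the correct range is $(0,1]$; and your positivity argument for the trigonometric branch is sound because $\theta\in(-\tfrac{\pi}{2},\tfrac{\pi}{2})$ and $\omega+\theta<\tfrac{3\pi}{2}$ force $\omega+\theta\in(-\tfrac{\pi}{2},\tfrac{\pi}{2})$ once $\cos(\omega+\theta)>0$, so the whole argument stays on one positive branch of the cosine.
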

\begin{proof}
Since the paths in \eqref{eq:solution2} and \eqref{eq:solution3} both satisfy the \eqref{eq:GeodesicInfo}.
We will prove that if $|p_0-p_1|>\sigma^2$, then these exists a unique pair of parameters $a, b$ such that $p_t$ from \eqref{eq:solution2} is equal to $p_0$ and $p_1$ at $t=0, 1$, respectively and there exists no path of the form \eqref{eq:solution3} that satisfies the boundary conditions.
If $|p_1-p_0|<\sigma^2 $ then there exists a unique path of the form \eqref{eq:solution3} and there exists no path of the form \eqref{eq:solution2} that satisfies the boundary conditions.

To prove the first statement, we set $p_t$ in \eqref{eq:solution2} to $p_0$ and $p_1$ at $t=0, 1$, respectively, to obtain that
\begin{align}
p_0&=a-\frac{\sigma^4}{4ab^2},\label{eq:P0ab}\\
p_1&=a e^{b}-\frac{\sigma^4}{4ab^2}e^{-b}.\nonumber
\end{align} 
Then, we can derive the following
\[
a=\frac{p_1-p_0e^{-b}}{e^{b}-e^{-b}}.
\]
Next, substituting the above expression into \eqref{eq:P0ab} then multiplying both sides by $4(p_1-p_0e^{-b})b^2$ to obtain that
\begin{align}\label{eq:hb}
4(p_1-p_0e^{-b})(p_1-p_0e^b)b^2-\sigma^4(e^b-e^{-b})^2=0.
\end{align}
The above equation has a trivial solution at $b=0$. Moreover, if $b$ is a solution to \eqref{eq:hb} so is $-b$.
For a non-zeros $b$ that satisfies \eqref{eq:hb}, the coefficient of the second term of $p_t$ is equal to
\[
-\frac{\sigma^4}{4ab^2}=\frac{p_1-p_0e^b}{e^{-b}-e^{b}}.
\]
Therefore, \eqref{eq:solution2} is equivalent to
\[
p_t=\frac{p_1-p_0e^{-b}}{e^{b}-e^{-b}} e^{bt}+\frac{p_1-p_0e^b}{e^{-b}-e^{b}}e^{-bt}.
\]
Thus, switching $b$ to $-b$ does not change the covariance path in \eqref{eq:solution2}. 
Therefore, we only consider positive solutions to \eqref{eq:hb}.
To this end, we denote the left hand side of \eqref{eq:hb} by $\phi(b)$. It is straightforward to derive that 
\begin{align*}
\frac{d \phi(b)}{db}\mid_{b=0}&=0, \\
\frac{d^2 \phi(b)}{db^2}\mid_{b=0}&=8((p_1-p_0)^2-\sigma^4).
\end{align*}
Moreover, 
\[
\frac{d^3 \phi(b)}{db^3}=-4p_0p_1(e^b-e^{-b})(b^2+6b+6)-8\sigma^4(e^{2b}-e^{-2b})
\] 
which is negative for all $b>0$. 
Therefore, if $(p_1-p_0)^2-\sigma^4>0$, then the function $\phi(b)$ is convex and positive near $b=0$.
But $\phi(b)<0$ as $b\rightarrow \infty$. Because its second order derivative $\frac{d^2 \phi(b)}{db^2}$ is monotonically decreasing, we conclude that there exist a unique solution to $\phi(b)=0$.

To prove the second statement, we take the derivative of $p_t$ given by \eqref{eq:solution3} to obtain
\begin{align*}
\dot p_t=-\sigma^2 \sin(\omega t+\theta).
\end{align*}
If $p_0, p_1$ are the endpoints of $p_t$, then it is necessary that
\[
|p_1-p_0|=|\int_0^1 \dot p_t dt |\leq \int_0^1 |\dot p_t| dt \leq \sigma^2.
\]
To show that there exist a path of the form \eqref{eq:solution3} for any $p_0, p_1$ that satisfies $|p_1-p_0|<\sigma^2$, we re-parameterize \eqref{eq:solution3} as follows
\begin{align*}
p_t=c\cos(\omega t)+d\sin(\omega t),
\end{align*}
with $(c^2+d^2)\omega^2=\sigma^4$. By setting $p_t$ to $p_0, p_1$ at $t=0, 1$, respectively, we obtain that
\begin{align}
p_0&=c,\label{eq:P0c}\\
p_1&=c\cos(\omega)+d\sin(\omega).\nonumber
\end{align}
Therefore, 
\begin{align}\label{eq:P1d}
d=\frac{p_1-p_0\cos(\omega)}{\sin(\omega)}.
\end{align}
Next, substituting \eqref{eq:P0c} and \eqref{eq:P1d} to $\sin^2(\omega)(c^2+d^2)=\sin^2(\omega)\frac{\sigma^4}{\omega^2}$ to obtain that 
\begin{align}\label{eq:omegaeq}
\frac{\sigma^4}{\omega^2}\sin^2(\omega)+2p_0p_1\cos(\omega)=p_0^2+p_1^2.
\end{align}
We denote the left hand size of the above equation by $\psi(\omega)$. 
Then, the following holds
\[
\lim_{\omega \rightarrow 0} \psi(\omega)=\sigma^4+2p_0p_1\geq (p_1-p_0)^2+2p_0p_1=p_0^2+p_1^2.
\]
Moreover, $\psi(\pi)=-2p_0p_1<0$. 
Furthermore, it can be shown that 
\[
\frac{d\psi(\omega)}{d\omega}=\left(2\frac{\sigma^4}{\omega^3}(\omega\cos(\omega)-\sin(\omega))-2p_0p_1 \right)\sin(\omega)<0
\]
for all $\omega\in (0, \pi)$. Therefore, there exists a unique solution to \eqref{eq:omegaeq} for $\omega\in(0, \pi)$, which completes the proof.
\end{proof}

\section{Weighted-least-squares based covariance paths}\label{sec:rot}
For scalar-valued covariance, the Fisher-Rao metric $\g_{ p}(\dot  p)$ can be viewed as the squared norm $(\tfrac{\dot  p}{ p})^2$.
For matrices, there is in general no unique ways to define matrix divisions. For example, in the following differential equation
\begin{align}\label{eq:dSAt}
\dot  P_t=A_t P_t+ P_t A_t',
\end{align}
the matrix $A_t$ can be viewed as a non-commutative division of $\frac12\dot P$ by $ P$. 
For a given pair of matrices $ P_t$ and $\dot{ P}_t$ there are infinite $A_t$ that satisfy \eqref{eq:dSAt}. 
But there could be a unique one that minimizes or maximizes a quadratic function $\f(A_t)$ whose optimal value provides a well-defined quadratic norm of the non-commutative division of $\dot P_t$ by $ P_t$. The optimal value of the quadratic function provides a way to define Riemmanian metrics on the manifold of covariance matrices in order to quantify the similarity between covariance matrices. This motivates our choice of the third objective function.

Specifically, for a matrix $A_t\in \mR^{n\times n}$, we decompose it as $A_t=\As+\Aa$ where
\begin{align*}
A_{t,\ms}&:=\tfrac12 (A_t+A_t'),\\
A_{t,\ma}&:=\tfrac12 (A_t-A_t'),
\end{align*}
are the symmetric and asymmetric parts of $A_t$, respectively.
For a given scalar $\epsilon> 0$, we define the following weighted squared norm of $A_t$
\begin{align*}
\f_{\epsilon}^{\rm wls}(A_t)&:=\|A_{t,\ms}\|_{\rm F}^2+\epsilon\|A_{t,\ma}\|_{\rm F}^2\\
&=\frac{1+\epsilon}{2}\trace(A_tA_t')+\frac{1-\epsilon}{2}\trace(A_tA_t).
\end{align*}

Following \eqref{eq:generalProblem}, we consider the optimization problem in below:
\begin{align}\label{eq:probRot}
\min_{ P_t, A_t} \bigg\{&\int_0^1 \frac{1+\epsilon}{2}\trace(A_tA_t')+\frac{1-\epsilon}{2}\trace(A_tA_t) dt \mid\nonumber \\
& \dot  P_t=A_t  P_t+ P_t A_t'+\sigma^2 I,  P_0,  P_1 \mbox{ specified}\bigg\}.
\end{align}

\begin{prop}\label{prop:Rot}
Given $P_0, P_1\in \cS^n_{++}$ and two scalars $\epsilon,\sigma>0$. 
If there exists a pair of matrix-valued functions $P_t, \Pi_t$ that satisfy 
\begin{align}
\dot P_t&=-\frac{1+\epsilon}{2\epsilon} \left(\Pi_t  P_t^2+ P_t^2\Pi_t \right) +\frac{1-\epsilon}{\epsilon} P_t\Pi_t P_t+\sigma^2I,\label{eq:SigmaRot}\\
\dot \Pi_t&=\frac{1+\epsilon}{2\epsilon}(\Pi_t^2 P_t+ P_t\Pi_t^2)-\frac{1-\epsilon}{\epsilon} \Pi_t P_t\Pi_t,\label{eq:LambdaRot}
\end{align}
with $P_t$ being equal to $P_0, P_1$ at $t=0, 1$, respectively, then $P_t$ is a optimal solution to \eqref{eq:probRot}.
Moreover, the optimal $A_t$ is given by
\begin{align}\label{eq:AtRot}
A_t=-\frac12(\Pi_t P_t+ P_t\Pi_t)+\frac{1}{2\epsilon}( P_t\Pi_t-\Pi_t P_t).
\end{align}
\end{prop}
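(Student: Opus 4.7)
The plan is to follow the variational approach used in Propositions~\ref{prop:OMT} and \ref{prop:info}, applying Pontryagin's minimum principle to \eqref{eq:probRot} via the Hamiltonian
\begin{align*}
h_3( P_t, A_t, \Pi_t) = \tfrac{1+\epsilon}{2}\trace(A_t A_t') + \tfrac{1-\epsilon}{2}\trace(A_t A_t) + \trace\bigl(\Pi_t(A_t  P_t +  P_t A_t' + \sigma^2 I)\bigr),
\end{align*}
where the costate $\Pi_t$ can be taken symmetric because the state constraint \eqref{eq:sigmat} is matrix-symmetric. Since $h_3$ is strictly convex quadratic in $A_t$ while the coupling with $\Pi_t$ is only linear in $A_t$, the pointwise minimization over $A_t$ is characterized by its unique first-order stationary point, so the Pontryagin conditions will pin down the optimal control at each instant.

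First, I would set $\partial h_3 / \partial A_t = 0$. The quadratic terms contribute $(1+\epsilon) A_t' + (1-\epsilon) A_t$ and the coupling contributes $2  P_t \Pi_t$ (using symmetry of $ P_t$ and $\Pi_t$), giving the identity $(1+\epsilon) A_t' + (1-\epsilon) A_t = -2  P_t \Pi_t$. Transposing this identity and then adding and subtracting the two versions isolates the symmetric and antisymmetric parts of $A_t$:
\[
A_{t,\ms} = -\tfrac{1}{2}( P_t \Pi_t + \Pi_t  P_t), \qquad A_{t,\ma} = \tfrac{1}{2\epsilon}( P_t \Pi_t - \Pi_t  P_t),
\]
whose sum is precisely \eqref{eq:AtRot}. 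The asymmetry in the $\epsilon$-weighting arises because $A_{t,\ma}$ is penalized by a factor $\epsilon$ in the cost while entering the constraint with the same weight as $A_{t,\ms}$.

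Next, I would substitute this optimal $A_t$ into the dynamics $\dot  P_t = A_t  P_t +  P_t A_t' + \sigma^2 I$ to derive \eqref{eq:SigmaRot}, and into the costate equation $\dot \Pi_t = -\partial h_3 / \partial  P_t$, which reduces to $\dot \Pi_t = -A_t' \Pi_t - \Pi_t A_t$ since the running cost is $ P_t$-free, to derive \eqref{eq:LambdaRot}. Both substitutions reduce to bookkeeping with the products $ P_t^2 \Pi_t$, $\Pi_t  P_t^2$, $ P_t \Pi_t  P_t$ (for the first equation) and $\Pi_t^2  P_t$, $ P_t \Pi_t^2$, $\Pi_t  P_t \Pi_t$ (for the second); the coefficients $\tfrac{1+\epsilon}{2\epsilon}$ and $\tfrac{1-\epsilon}{\epsilon}$ emerge naturally from multiplying out the $-\tfrac12$ symmetric and $\tfrac{1}{2\epsilon}$ antisymmetric pieces of $A_t$ with $ P_t$ and $\Pi_t$ on either side.

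The main obstacle is not the Euler--Lagrange derivation itself, which is routine though bookkeeping-heavy, but rather that the proposition establishes optimality only conditionally on the existence of a pair $(P_t, \Pi_t)$ solving the coupled nonlinear two-point boundary-value problem \eqref{eq:SigmaRot}--\eqref{eq:LambdaRot} for the prescribed $P_0, P_1$. A complete treatment would require a shooting-type existence and uniqueness analysis analogous to the scalar study of Proposition~\ref{prop:expsolution}, which lies outside the scope of the present statement.
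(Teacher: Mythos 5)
Your proposal is correct and follows essentially the same route as the paper: the same Hamiltonian $h_3$, the stationarity condition in $A_t$ (yours is simply the transpose of the paper's $(1+\epsilon)A_t+(1-\epsilon)A_t'+2\Pi_t P_t=0$, hence equivalent), the split into symmetric and antisymmetric parts yielding \eqref{eq:AtRot}, and substitution into the state and costate equations to obtain \eqref{eq:SigmaRot}--\eqref{eq:LambdaRot}. Your closing remark about the conditional nature of the result matches the paper, whose proposition is likewise stated under the hypothesis that the two-point boundary-value problem admits a solution.
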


\begin{proof}
Following the same method as in the proof of Proposition \eqref{prop:info}, we develop the necessary conditions for a stationary value using the following Hamiltonian
\begin{align*}
h_3( P_t,A_t,\Pi_t)=&\trace\bigg(\frac{1+\epsilon}{2}\trace(A_tA_t')+\frac{1-\epsilon}{2}\trace(A_tA_t) \\
&+\Pi_t(A_t P_t+ P_t A_t'+\sigma^2I)\bigg).
\end{align*}
Setting $-\dot \Pi_t$ equal to partial derivative of $h_3(\cdot)$ with respect to $ P_t$ to obtain that
\begin{align}\label{eq:dotLambda_rot}
\dot\Pi_t=-\Pi_t A_t-A_t' \Pi_t.
\end{align}
Moreover, setting the partial derivative of $h_2(\cdot)$ with respect to $A_t$ equal to zero to obtain that 
\begin{align}\label{eq:At_rot_condition}
(1+\epsilon)A_t+(1-\epsilon) A_t'+2 \Pi_t P_t=0.
\end{align}
Thus, the symmetric and asymmetric part of $A_t$ are equal to
\begin{align*}
\As&=-\frac12(\Pi_t P_t+ P_t\Pi_t),\\
\Aa&=\frac{1}{2\epsilon}( P_t\Pi_t-\Pi_t P_t).
\end{align*}
Therefore \eqref{eq:AtRot} holds. Then, \eqref{eq:SigmaRot} and \eqref{eq:LambdaRot} can be obtained by substituting \eqref{eq:AtRot} into \eqref{eq:sigmat} and \eqref{eq:dotLambda_rot}, respectively.
\end{proof}
Moreover, the path defined by \eqref{eq:SigmaRot} and \eqref{eq:LambdaRot} also coincide with the path given by \eqref{eq:SigmaInfo} and \eqref{eq:LambdaInfo} for scalar-valued covariance. Therefore, the results from Proposition \ref{prop:expsolution} can also be adapted to the solution of \eqref{eq:probRot}.

\begin{prop}\label{prop:expsolution_rot}
Given three positive scalars $p_0, p_1$ and $\sigma$. If $|p_1-p_0|>\sigma^2$, then exists a unique solution to \eqref{eq:probRot} which has the form of \eqref{eq:solution2}. If $|p_1-p_0|<\sigma^2$, then there exists a unique solution to \eqref{eq:probRot} which has the form of \eqref{eq:solution3}.
\end{prop}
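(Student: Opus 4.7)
The plan is to leverage the observation made in the paragraph immediately preceding the proposition: for scalar-valued covariance, the coupled ODE system \eqref{eq:SigmaRot}--\eqref{eq:LambdaRot} for the weighted-least-squares problem reduces to the same system \eqref{eq:SigmaInfo}--\eqref{eq:LambdaInfo} that governs the Fisher-Rao/information-geometry problem. Once this reduction is established, Proposition~\ref{prop:expsolution} can be invoked essentially verbatim.

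First, I would specialize Proposition~\ref{prop:Rot} to the scalar case. Setting $n=1$, the commutator $p_t\pi_t-\pi_t p_t$ vanishes, and the anti-commutator $\pi_t p_t^2 + p_t^2 \pi_t$ becomes $2\pi_t p_t^2$. A short simplification of \eqref{eq:SigmaRot} yields
\begin{align*}
\dot p_t = \left(-\frac{1+\epsilon}{\epsilon}+\frac{1-\epsilon}{\epsilon}\right)\pi_t p_t^2 + \sigma^2 = -2\pi_t p_t^2 + \sigma^2,
\end{align*}
which is exactly the scalar version of \eqref{eq:SigmaInfo}. An analogous calculation for \eqref{eq:LambdaRot} gives $\dot \pi_t = 2\pi_t^2 p_t$, which is the scalar version of \eqref{eq:LambdaInfo}. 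Hence the necessary conditions from the two Hamiltonians collapse onto the same ODE system, and a scalar path $p_t$ is a stationary point of \eqref{eq:probRot} if and only if it is a stationary point of \eqref{eq:probInfo}. In particular, any $p_t$ satisfying the scalar geodesic equation $p_t \ddot p_t -(\dot p_t)^2 + \sigma^4 = 0$ is admissible.

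Next, since the three parametric families \eqref{eq:solution1}--\eqref{eq:solution3} have already been verified to satisfy this scalar geodesic equation, the existence/uniqueness dichotomy of Proposition~\ref{prop:expsolution} transfers directly. I would re-state the argument: if $|p_1-p_0|>\sigma^2$, invoke the convexity/monotonicity analysis of $\phi(b)$ from the proof of Proposition~\ref{prop:expsolution} to obtain a unique positive $b$ (and thus a unique $a$) so that \eqref{eq:solution2} matches the boundary values, while ruling out the sinusoidal family by the bound $|p_1-p_0|\le \int_0^1|\dot p_t|\,dt \le \sigma^2$ that any path of the form \eqref{eq:solution3} must satisfy. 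Conversely, if $|p_1-p_0|<\sigma^2$, the same bound eliminates \eqref{eq:solution2} (by considering the analogous monotonicity of $\phi$ on $b>0$, noting the sign of $\phi''(0)$ flips), and the analysis of $\psi(\omega)$ on $(0,\pi)$ produces a unique admissible $(\omega,\theta)$.

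I do not expect any substantive obstacle here; the content of the proposition is essentially a corollary of Proposition~\ref{prop:expsolution} combined with the scalar reduction of the ODE system. The one item worth double-checking is that the optimality characterization from Proposition~\ref{prop:Rot} (which gives a sufficient condition via the existence of a path $(p_t,\pi_t)$ satisfying the two-point boundary value problem) is indeed realized by the candidate paths; this amounts to verifying that the induced $\pi_t = \tfrac12(\sigma^2 p_t^{-2} - p_t^{-1}\dot p_t p_t^{-1})$ is well-defined along each candidate, which is immediate since $p_t>0$ on $[0,1]$ under the stated parameter constraints. With that check, the proof is complete.
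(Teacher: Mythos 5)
Your proposal is correct and follows essentially the same route as the paper: the paper itself proves this proposition only by the remark preceding it, namely that in the scalar case \eqref{eq:SigmaRot}--\eqref{eq:LambdaRot} collapse to \eqref{eq:SigmaInfo}--\eqref{eq:LambdaInfo}, after which Proposition~\ref{prop:expsolution} is invoked. Your explicit verification of that scalar reduction (the coefficient computation giving $\dot p_t=-2\pi_t p_t^2+\sigma^2$ and $\dot\pi_t=2\pi_t^2 p_t$) and the re-use of the $\phi(b)$/$\psi(\omega)$ analysis match the paper's intent, with the added benefit of spelling out the step the paper leaves implicit.
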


\begin{remark}
For any given $\epsilon\neq0$ and a pair of initial values $\Pi_0, P_0$, the pair of equations \eqref{eq:SigmaRot} and \eqref{eq:LambdaRot} provide a smooth path of covariance matrices. In the special case when $\epsilon=-1$, \eqref{eq:SigmaRot} and \eqref{eq:LambdaRot} is equivalent to the Fisher-Rao based path given by \eqref{eq:SigmaInfo} and \eqref{eq:LambdaInfo}. In the noiseless situation, the existence and uniqueness of the covariance path that satisfy \eqref{eq:SigmaRot} and \eqref{eq:LambdaRot} has been shown in \cite{Ning2018} for $\epsilon$ taking values around $-1$.

\end{remark}

To understand the influence of input noise to the optimal covariance path, we consider a trajectory defined by \eqref{eq:SigmaRot} to \eqref{eq:AtRot} with given initial values $ P_0$ and $\Pi_0$ and the symmetric and asymmetric parts of the system matrix $A_t$ are initialized by \eqref{eq:AtRot}. Then it is straightforward to verify that the asymmetric part of $A_t$ given by \eqref{eq:AtRot} is constant, which is denoted by $A_{0,\ma}$ and is equal to $\frac{1}{2\epsilon}( P_0\Pi_0-\Pi_0 P_0)$.
On the other hand, by taking the derivative of the symmetric part of $A_t$, denoted by $A_{t,\ms}$, we obtain that
\begin{align}\label{eq:dotAts}
\dot A_{t,\ms}=(1+\epsilon) (A_{0,\ma} A_{t,\ms}+A_{t,\ms}A_{0,\ma}')-\sigma^2\Pi_t.
\end{align}
Next, we apply change of variables to define
\[
\hat A_t=e^{(1+\epsilon)A_{0,\ma}'t} \left( A_{t,\ms}+\epsilon A_{0,\ma}'\right) e^{(1+\epsilon)A_{0,\ma} t}.
\]
Similarly, applying a time-varying change of coordinate to define
\begin{align*}
\hat  P_t&=e^{(1+\epsilon)A_{0,\ma}'t}  P_t e^{(1+\epsilon)A_{0,\ma} t},\\
\hat \Pi_t&=e^{(1+\epsilon)A_{0,\ma}'t} \Pi_t e^{(1+\epsilon)A_{0,\ma} t}.
\end{align*}
Then the derivative of the new variable $\hat A_t$ is equal to
\begin{align}\label{eq:dothatAt}
 \dot{\hat{A}}_t=-\sigma^2 \hat \Pi_t.
\end{align}
Moreover, the derivative of $\hat  P_t$ is equal to
\begin{align}
\dot{\hat{ P}}_t=& (1+\epsilon)A_{0,\ma}'t \hat  P_t+(1+\epsilon)\hat  P_t A_{0,\ma}\nonumber\\
&+ e^{(1+\epsilon)A_{0,\ma}'t} (A_t P_t+ P_tA_t'+\sigma^2 I) e^{(1+\epsilon)A_{0,\ma} t}\nonumber\\
=&\hat A_t \hat  P_t+\hat P_t \hat A_t'+\sigma^2 I.\label{eq:hatSigma_rot}
\end{align}
Similarly, the derivative of $\hat{\Pi}_t$ is given by
\begin{align}
\dot{\hat{\Pi}}_t&= -\hat A_t' \hat \Pi_t-\hat\Pi_t \hat A_t.\label{eq:hatLambda_rot}
\end{align}
Combining \eqref{eq:dothatAt} and \eqref{eq:hatLambda_rot}, we obtain that the covariance path is determined by the following second-order differential equation in the roating frame
\begin{align}\label{eq:ddotAt}
\ddot{\hat{A}}_t+\hat A_t' \dot{\hat{A}}_t+\dot{\hat{A}}_t'\hat A_t=0,
\end{align}
with the initial values given by
\begin{align}
\hat A_0&=A_{0,\ms}+\epsilon A_{0,\ma}'=- P_0\Pi_0,\label{eq:hatA0}\\
\dot{\hat{A}}_0&=-\sigma^2 \Pi_0.\label{eq:dothatA0}
\end{align}

The special case when $\sigma=0$, the matrix $\hat A_t$ is clearly constant based on \eqref{eq:dothatAt}. 
Therefore, \eqref{eq:hatSigma_rot} becomes a linear time-invariant system whose solution is given by the standard expression $\hat  P_t=e^{\hat A_0t} P_0e^{\hat A_0't}$. Then, transforming $\hat A_t$ and $\hat  P_t$ to the original coordinate to obtain that
\begin{align}
A_t&=e^{(1+\epsilon)A_{0,\ma} t} A_0 e^{(1+\epsilon)A_{0,\ma} 't},\label{eq:At_rot_sigma0}\\
 P_t&=T_{\epsilon,t}(A) P_0T_{\epsilon,t}(A)',\label{eq:Sigma_rot_sigma0}
\end{align} 
where $T_{\epsilon,t}(A)=e^{(1+\epsilon)A_{0,\ma} t}e^{(A_{0,\ms}+\epsilon A_{0,\ma}')t}, A_{0,\ms}=-\frac12(\Pi_0 P_0+ P_0\Pi_0), A_{0,\ma}=\frac{1}{2\epsilon}( P_0\Pi_0-\Pi_0 P_0)$. $T_{\epsilon,t}(A)$ is the state transition matrix which satisfies that $\dot {T}_{\epsilon,t}(A)=A_t T_{\epsilon,t}(A)$. It is interesting to note that the system matrix $A_t$ has a rotating eigenspace. If $\epsilon$ is sufficiently close to $-1$, then there exists a unique covariance path of the form \eqref{eq:dothatA0} that connects any given $P_0, P_1\in \mS^n_{++}$ \cite{Ning2018}.

\section{Example}\label{sec:example}
\subsection{Comparing scalar-valued covariance paths}
In this example, we compare scalar-valued covariance paths given by the closed-form expressions in \eqref{eq:Stomt} and  \eqref{eq:solution2} to \eqref{eq:solution3}.
Figure \ref{fig:scalar} illustrates several covariance paths with the initial value being $P_0=6$ and $\sigma=4$. These plots clearly illustrate the differences between the paths obtained from OMT and the Fisher-Rao metric, especially between the paths whose endpoints at $t=1$ are far away from $P_0+\sigma^2$, which is equal to $22$ in this example.
\begin{figure*}[htb]
\centering
\includegraphics[width=.5\textwidth]{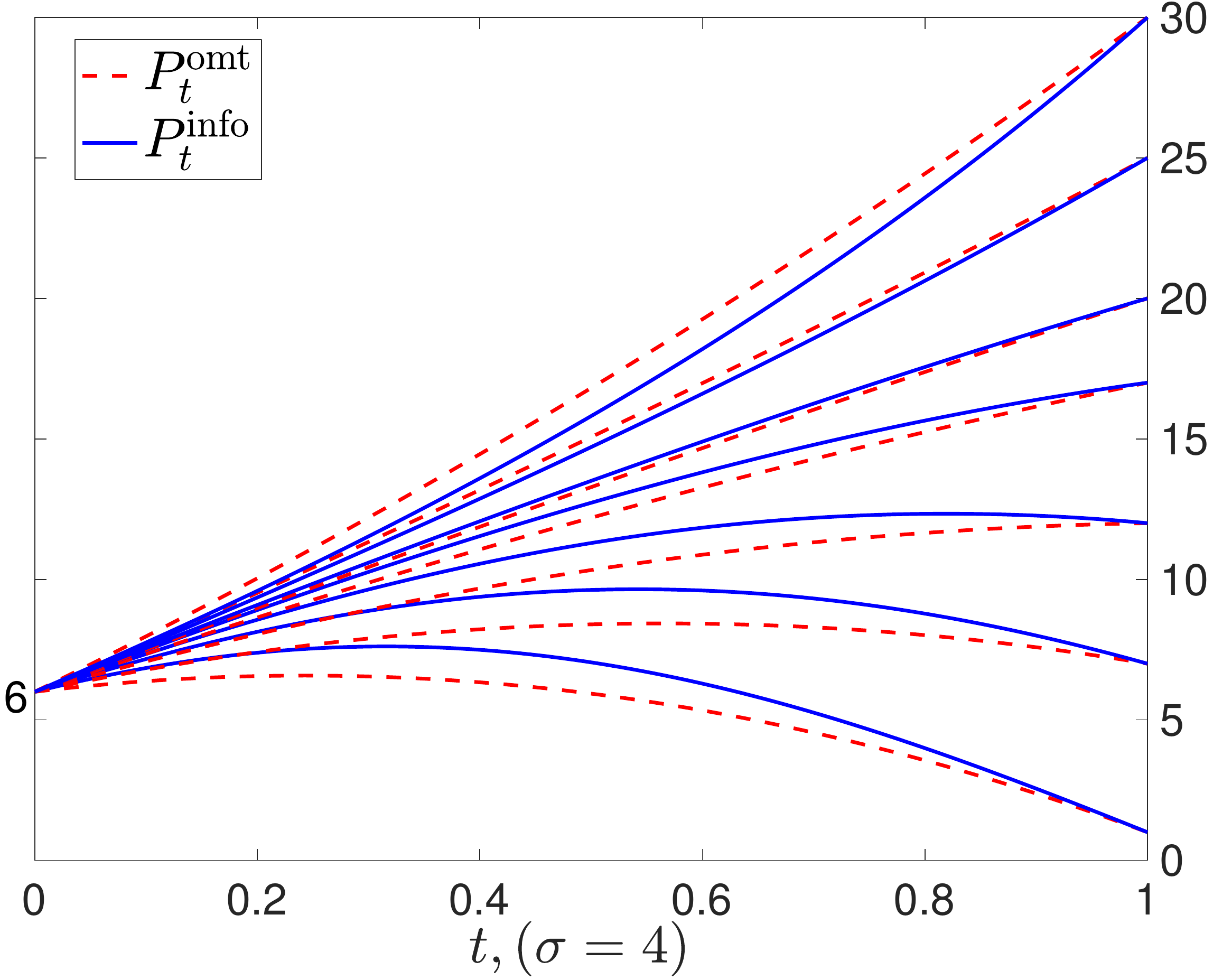}
\caption{\small{An illustration of the covariance paths. The dashed red lines denoted by $P_t^{\rm omt}$ illustrate the paths given by \eqref{eq:Stomt} based on OMT. The slid blue lines denoted by $P_t^{\rm info}$ illustrate the covariance paths given by \eqref{eq:solution2} to \eqref{eq:solution3} based on the Fisher-Rao metric.}}\label{fig:scalar}
\end{figure*}

\subsection{Comparing matrix-valued covariance paths}
In this example, we illustrate the difference between the above covariance paths using two fixed endpoints given by 
\begin{align}\label{eq:S0S1}
 P_0= \left[\begin{matrix}1 & 0 \\ 0& 0.3 \end{matrix}\right],  P_1= \left[\begin{matrix}0.3 & 0 \\ 0& 1 \end{matrix}\right]. 
\end{align}
Fig. \ref{fig:OMT2D} shows the OMT-based paths in \eqref{eq:Stomt} using several different values for $\sigma$, where the ellipsoids denote isocontour of the quadratic function 
$\bx' P_t \bx=r^2$ with $r=0.5$. 
Fig. \ref{fig:Info2D} illustrates the Fisher-Rao-based paths given by the \eqref{eq:GeodesicInfo}. Since $P_0$ and $P_1$ commute, the paths are obtained by using the closed-form expressions in \eqref{eq:solution2} to \eqref{eq:solution3} to the diagonal entries. Though there are minor differences between the two sets of covariance paths, all the $P_t$'s along the two paths have the same eigenspace.

Regarding the covariance paths defined by \eqref{eq:SigmaRot} and \eqref{eq:LambdaRot}, their closed-form solutions are currently unknown. For this particular pair of end points, there exists multiple local optimal paths. 
Specifically, we consider the solution to \eqref{eq:probRot} when $\epsilon=0$. In this case, any asymmetric system matrix $A$ of the form
\begin{align}\label{eq:A}
A=\left[\begin{matrix} 0& \pm \tfrac{(2k+1)\pi}{2}\\ \mp \tfrac{(2k+1)\pi}{2}& 0 \end{matrix} \right],
\end{align}
is an optimal solution because the corresponding objective value is equal to zero. 
The corresponding covariance paths are of the form
{\begin{align*}
 P_{\pm,t}=&\left[\begin{matrix}0.3+0.7\cos^2(\tfrac{(2k+1)\pi}{2}t)& \pm 0.7\cos(\tfrac{(2k+1)\pi}{2}t)\sin(\tfrac{(2k+1)\pi}{2}t)\\ \pm 0.7\cos(\tfrac{(2k+1)\pi}{2}t)\sin(\tfrac{(2k+1)\pi}{2}t)& 0.3+0.7\sin^2(\tfrac{(2k+1)\pi}{2}t) \end{matrix} \right]+\sigma^2 It.
\end{align*}}

In order to obtain a numerical solution for the covariance paths given by \eqref{eq:SigmaRot} and \eqref{eq:LambdaRot} with $\epsilon>0$, we apply the \emph{lsqnonlin} nonlinear optimization toolbox and the \emph{ode45} functions in MATLAB (The MathWorks, Inc., Natic, MA) to solve for $\Pi_0$ so that a path $P_t$ starts from $P_0$ will have the least square error relative to $P_1$ at $t=1$. We first set $\epsilon=0.001$ and choose two different initial values for $\hat \Pi_0$ used in the optimization algorithms given by 
\[
\hat \Pi_{\pm,0}=\pm\frac{1}{700}\left[\begin{matrix}0 & \pi \\ \pi & 0\end{matrix} \right],
\]
so that the corresponding system matrices given by \eqref{eq:AtRot} approximately satisfy \eqref{eq:A}. Next, we gradually increase $\epsilon$ using a step size of $0.001$ and use the optimal $\Pi_0$ from the previous step as the initial value. The left and right panels of Fig. \ref{fig:Rot} illustrate two branches of locally optimal paths corresponding to the two different initial values of $\Pi_0$ with $\sigma=0.5$ and several different values for $\epsilon$. All the numerical solutions satisfy the endpoints with the Frobenius norm of the residuals at the order of $10^{-6}$ or smaller. Very different from the paths shown in \eqref{fig:omt}, the paths in Fig. \ref{fig:Rot} have rotating eigenspace where the rotation direction depends on the initial choice of $\Pi_0$. Moreover, as $\epsilon$ increase, the paths become similar to those shown in Fig. \ref{fig:omt}.

\begin{figure*}[htb]
\centering
\subfloat[][OMT-based paths]{\includegraphics[width=.45\textwidth]{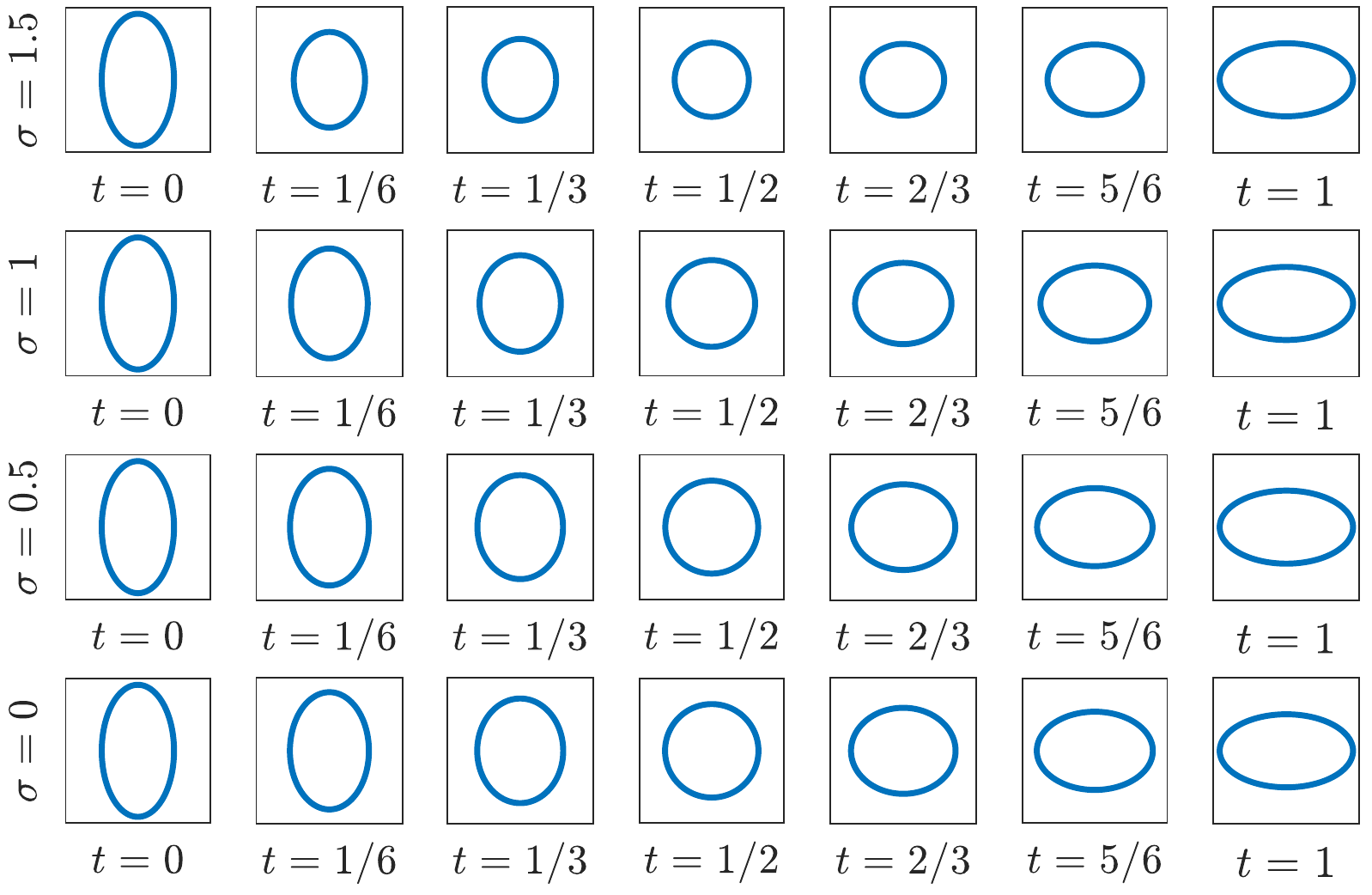}\label{fig:OMT2D}}\quad
\subfloat[][Fisher-Rao-based paths]{\includegraphics[width=0.45\textwidth]{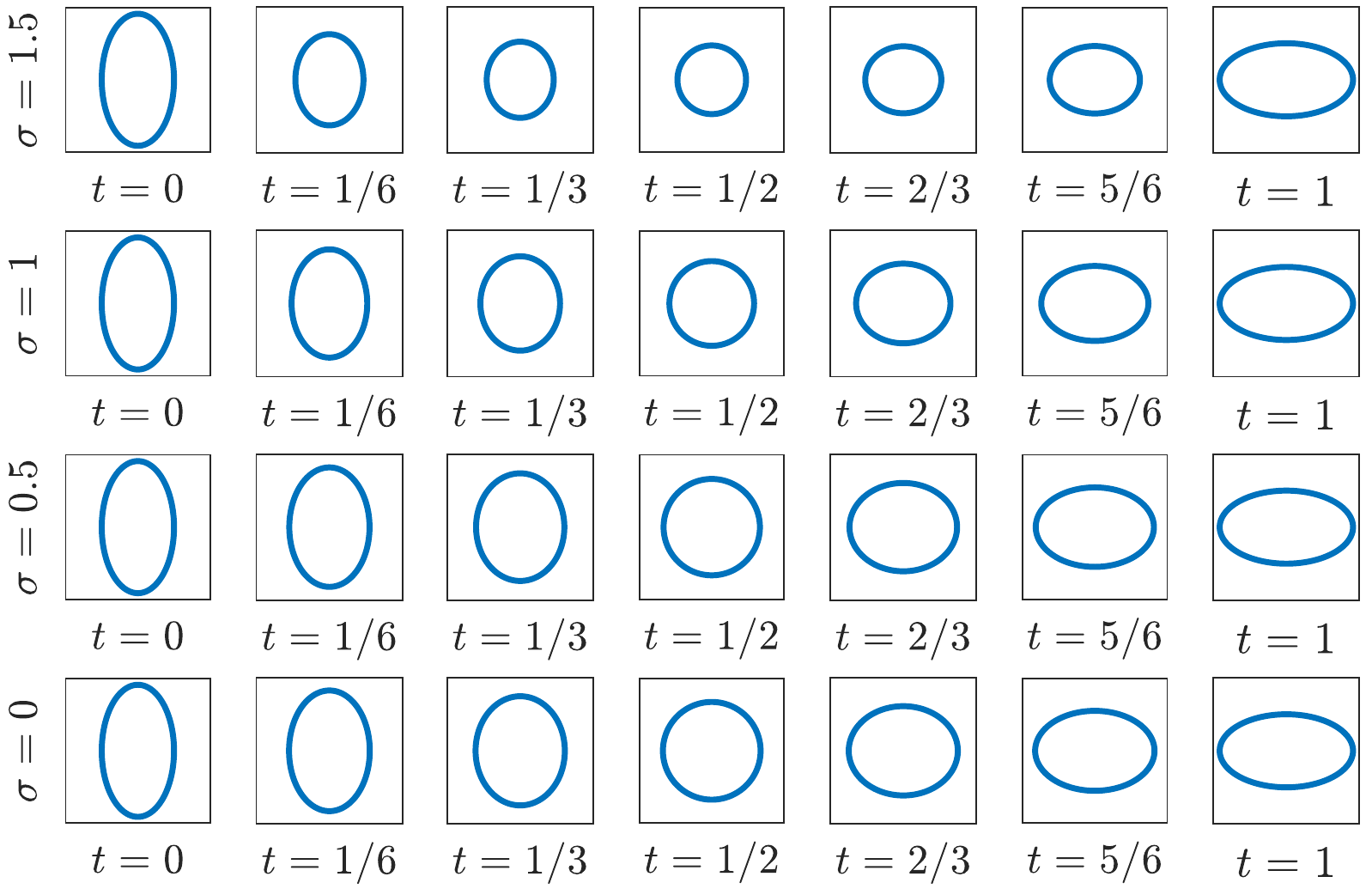}\label{fig:Info2D}}
\caption{\small{An illustration of the covariance paths obtained using \eqref{eq:Stomt} (Left) and \eqref{eq:GeodesicInfo} (Right), respectively, with the two endpoints given by $ P_0$ and $ P_1$ in \eqref{eq:S0S1}.}}\label{fig:omt}
\end{figure*}

\begin{figure*}[htb]
\centering
\subfloat[][]{\includegraphics[width=.45\textwidth]{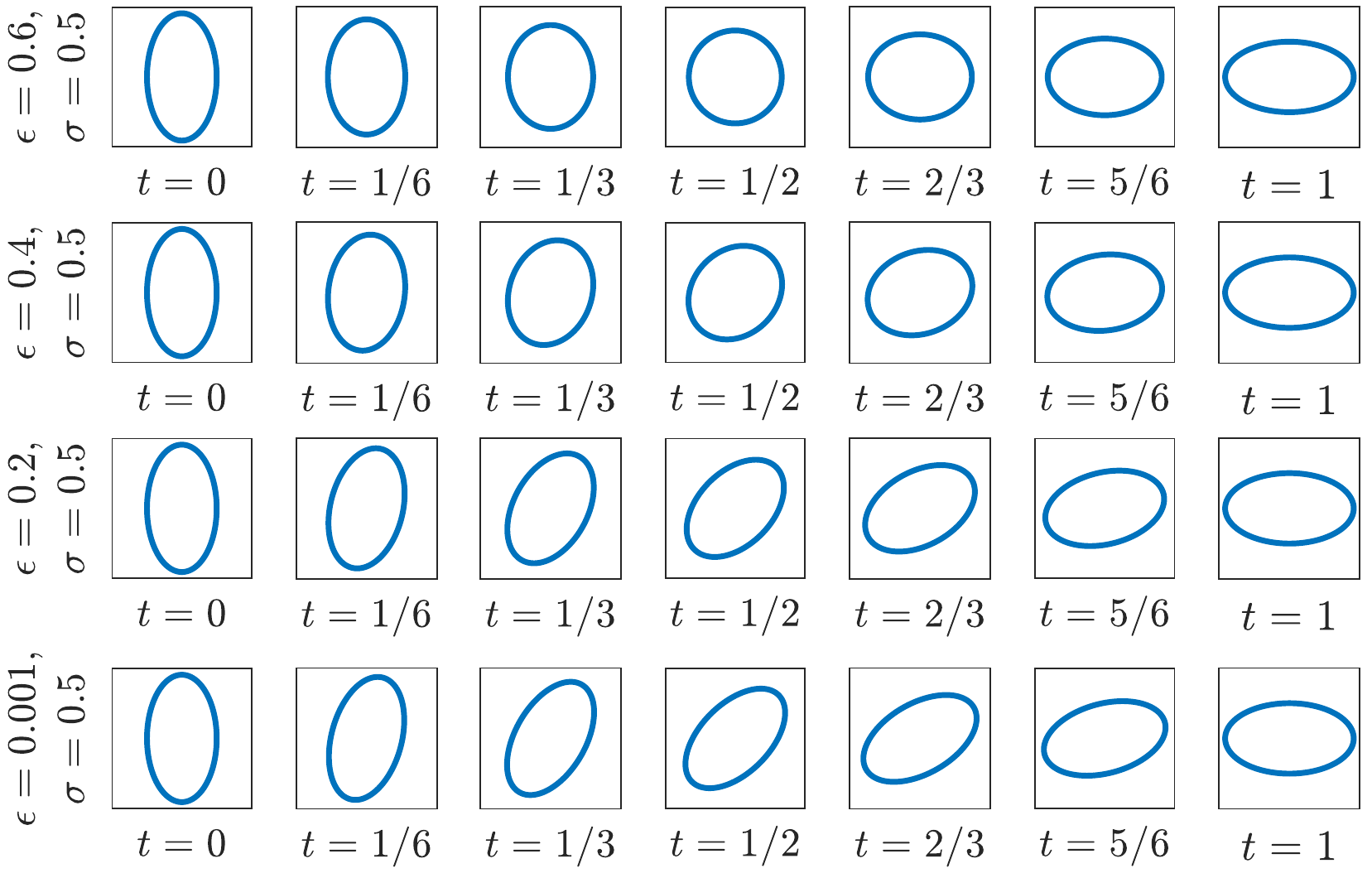}}\quad
\subfloat[][]{\includegraphics[width=0.45\textwidth]{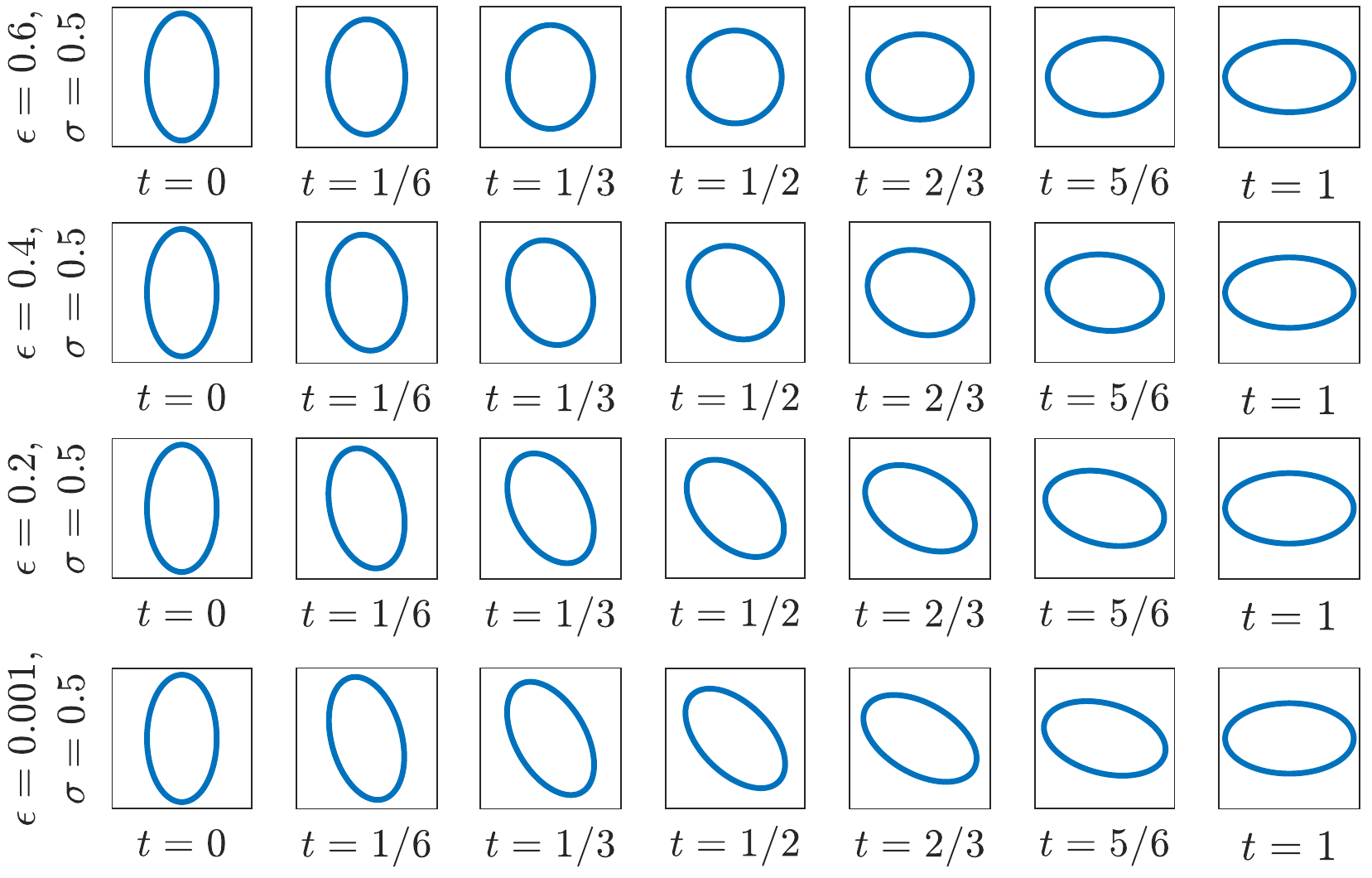}}
\caption{\small{An illustration of two branches, i.e. (a) and (b), of locally-optimal covariance paths obtained using \eqref{eq:SigmaRot} and \eqref{eq:LambdaRot} with different initial values for $\Pi$.}}\label{fig:Rot}
\end{figure*}

\subsection{Fitting noisy measurements of functional MRI data}
In this example, we apply the proposed covariance paths to fit noisy sample covariance matrices of a stochastic process based on a resting-state functional MRI (rsfMRI) dataset used in \cite{Ning2018}. 
The interested reader is referred to \cite{Ning2018} for more detailed information on the data. 
This problem is formulated as follows. 
Given $K$ sample covariance matrices, $\tilde  P_{t_1}, \ldots, \tilde P_{t_K}$ based on $K$ segments of a multivariate time series, find a smooth covariance path $ P_t$ that minimizes
\begin{align}\label{eq:probB}
\min_{ P_t}\sum_{k=1}^K \| P_{t_k}-\tilde P_{t_k}\|_{\rm F}^2.
\end{align}
In this example, we has $K=10$ noisy sample covariance from a 7-dimensional process. 
We apply three parametric models of covariance path to fit these measurements using the \emph{fminsdp} function in MATLAB. 
The first one is the closed-form expression given by \eqref{eq:Stomt} which is parameterized by $ P_0, \Pi_0$ and $\sigma$. 
The optimal solution is denoted by $\hat  P_{t}^{\rm omt}$. 
The second model is based on the differential equation \eqref{eq:SigmaInfo} and \eqref{eq:LambdaInfo} which is implemented by the \emph{ode45} function in MATLAB. The estimated path is denoted by $\hat P_t^{\rm info}$. 
Note again that this model is a special case of \eqref{eq:SigmaRot} and \eqref{eq:LambdaRot} when $\epsilon=-1$. 
The more general model in \eqref{eq:SigmaRot} and \eqref{eq:LambdaRot} relies on the estimation an additional parameter $\epsilon$. 
Since the differential equations are highly nonlinear in term of $\epsilon$, we find that the MATLAB algorithm only provides a local optimal value of $\epsilon$ depending on its initial value. 
In order to obtain a reliable covariance path to understand the rotation of energy among the variables, we apply the rotating-system based covariance path in \eqref{eq:At_rot_sigma0} to fit the measurements by setting $\epsilon=20$ as used in \cite{Ning2018}. The estimated path is denoted by $\hat  P_t^{\rm wls}$.

The discrete makers in Fig. \ref{fig:Fit} illustrate the noisy sample covariance of 6 entries of the covariance matrices. The blue, green and red plots are the estimated paths given by $\hat  P_t^{\rm omt}, \hat  P_t^{\rm info}$, and $\hat P_t^{\rm wls}$, respectively. The normalized squared errors corresponding to the three paths are equal to $0.1842, 0.1696$ and $0.1506$, respectively. The much lower estimation error corresponding to $\hat P_t^{\rm wls}$ is because of its capability in tracking rotations in the covariance.
\begin{figure*}[htb]
\centering
\includegraphics[width=.8\textwidth]{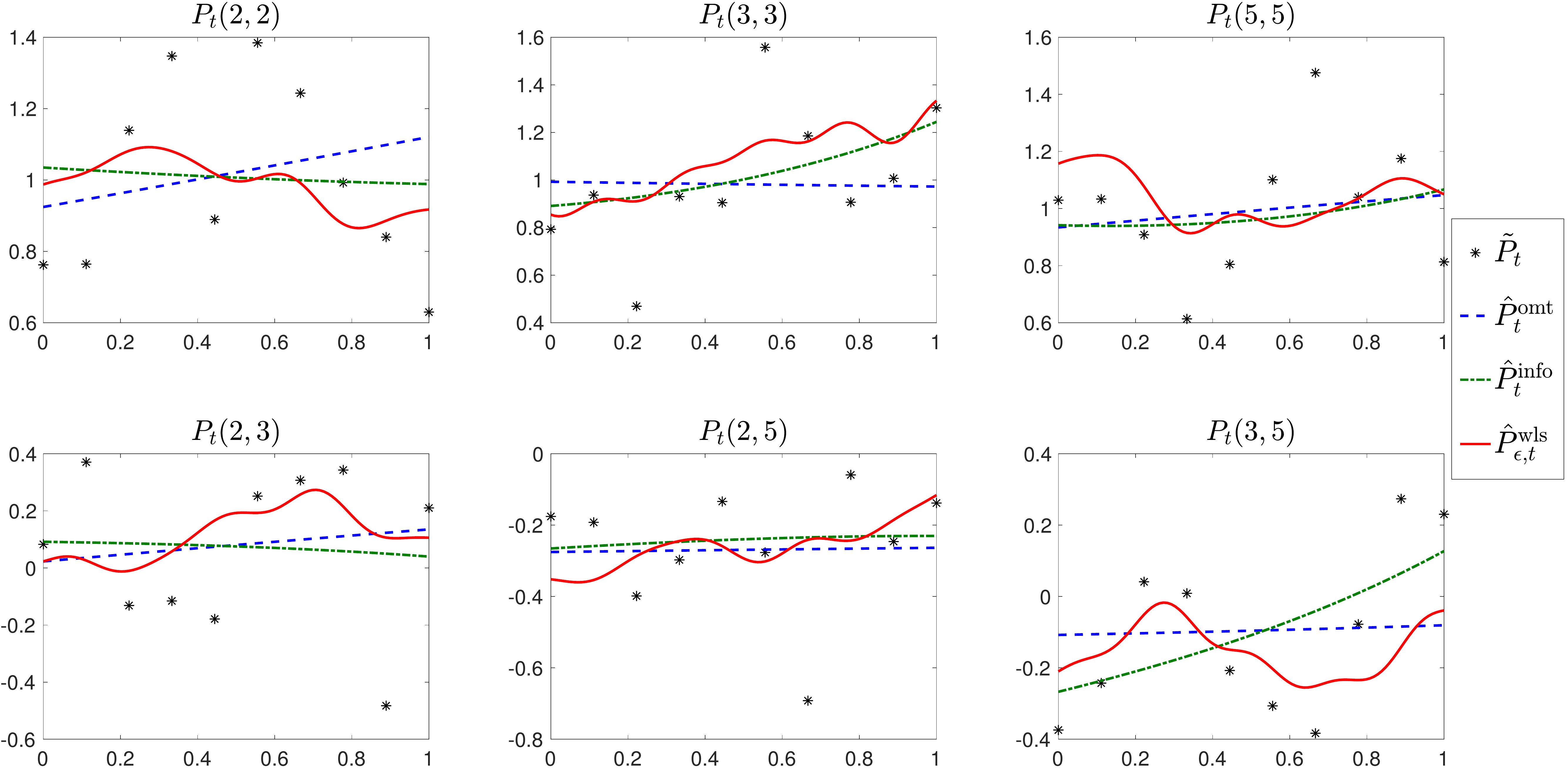}
\caption{\small{These plots illustrate the noisy sample covariance $\tilde P_t$ and the fitting results of three covariance paths based on a rsfMRI dataset.}}\label{fig:Fit}
\end{figure*}

\section{Discussion and conclusion}\label{eq:discussion}
In this paper, we have investigated three types of covariance paths by using different regularizations on linear stochastic systems.
The first covariance path is given by the solution to the Schr\"odinger bridge problem for multivariate Gaussian and optimal mass transport.
The second type of covariance path is based on a generalization of the Fisher-Rao metric with a stochastic input term.
The third type of covariance paths is obtained using a weighted-least-square regularizations on the system matrices.
It is interesting to remark that the Fisher-Rao metric can be viewed as a weight-mass-transport cost.
The corresponding covariance path is a special case of the weighted-least-square based solutions.
The main contributions of this work include the differential-equation formulations for the last two types of covariance paths and the closed-form expressions for the scalar-valued case.

The general theme of this paper is closely related to the work in \cite{JNG2012,Ning2013,NingMatrixOMT,ChenOMT,ChenWas1,Yamamoto2017} which all focused on investigating smooth paths connecting positive definite matrices. The proposed approach is based on different regularizations functions of system matrices which distinguishes this paper from early work. Finally, we remark that a main motivation of this paper is from a neuroimaging application on analyzing functional brain networks using resting-state functional MRI data. The proposed covariance paths with rotating eigenspace may provide a useful tool to understand the oscillations and directed connections among brain networks, which will be further explored in our future work.
\appendix
\balance
{\small
\bibliographystyle{IEEEtran}
\bibliography{IEEEabrv,CovMat}
}
\end{document}